\makeatletter\@addtoreset{equation}{section}\makeatother
\newcommand{\eps}{\varepsilon}
\newcommand{\R}{\mathbb R}
\newcommand{\cF}{F}
\newcommand{\cG}{\mathcal G}
\newcommand{\E}{\mathbb E}
\renewcommand{\P}{\mathbb P}
\newcommand{\N}{\mathbb N}
\renewcommand{\P}{\mathbb P}
\newtheorem{rem}{Remark}
\newtheorem{theorem}{Theorem}
\newtheorem{lemma}[theorem]{Lemma}
\theoremstyle{remark}
\newcommand{\sfrac}[2] {\mbox{$\frac{#1}{#2}$}}
\def\1{{\mathchoice {1\mskip-4mu\mathrm l}      
{1\mskip-4mu\mathrm l}
{1\mskip-4.5mu\mathrm l} {1\mskip-5mu\mathrm l}}}
\begin{document}


\title[Condensation effect in Kingman's model]
{Emergence of condensation in Kingman's model\\ of selection and mutation \vspace{0.2cm}
\centerline{\rm \small \today}}

\author[Steffen Dereich and Peter M\"orters ]{Steffen Dereich and Peter M\"orters}

\vspace{0.2cm}

\begin{abstract}
We describe the onset of condensation in the simple model for the balance between selection
and mutation given by Kingman in terms of a scaling limit theorem. Loosely speaking, this shows 
that the wave moving towards genes of maximal fitness has the shape of a gamma distribution. We 
conjecture that this wave shape is a universal phenomenon that can also be found in a variety 
of more complex models, well beyond the genetics context, and provide some further evidence for this.
\end{abstract}
\bigskip

\vspace{0.2cm}

\maketitle

\thispagestyle{empty}

\section{Introduction and statement of the result}

In~\cite{K78} Kingman proposes and analyses a simple model for the distribution of fitness in a population undergoing
selection and mutation. The characterisitic feature of this model is that the fitness of genes before and after mutation 
is modelled as independent, the mutation having destroyed the biochemical `house of cards'
built up by evolution. Kingman shows that in his model the distribution of the fitness in the population 
converges to a limiting distribution. There are two phases: When \emph{mutation} is favoured over selection, the limiting 
distribution is a skewed version of the fitness distribution of a mutant. But if \emph{selection} is favoured over mutation,
a condensation effect occurs, and we find that a positive proportion of the population in late generations has fitness very 
near the optimal value, leading to the emergence of an atom at the maximal fitness value in the limiting distribution.  
Physicists have argued that this is akin to the effect of Bose-Einstein condensation, in which for a dilute gas of weakly 
interacting bosons at very low temperatures a fraction of the bosons occupy the lowest possible quantum state, 
see for example~\cite{BFF09}. In the present paper, we focus on the Kingman model 
and discuss the form of the fitness distribution for that part of the population that eventually form the atom in the limiting 
distribution. After stating our theorem and giving a proof we will draw comparisons to other models in a discussion section at 
the end of this paper.
\medskip

Mathematically, Kingman's model consists of a sequence of probability measures $(p_n)$ on 
the unit interval $[0,1]$ describing the distribution of fitness values in the $n$th generation 
of a population. The parameters of the model are a   mutant fitness distribution 
$q$ on $[0,1]$ and some $0<\beta<1$ determining the relation between mutation and selection. If $p_n$ 
is the fitness distribution in the $n$th generation we denote
by $$w_n=\int x \, p_n(dx)$$ the mean fitness and define
$$p_{n+1}(dx)= (1-\beta) \, w_n^{-1} x \, p_n(dx) + \beta \, q(dx).$$
Loosely speaking, a proportion $1-\beta$ of the genes in the new generation are resampled from the
existing population using their fitness as a selective criterion, and the rest have undergone mutation and
are therefore sampled from the fitness distribution~$q$.
\medskip

We assume throughout that the mutant fitness distribution near its tip is stochastically larger
than the fitness distribution in the inital population, in the sense that the moments
$$m_n:=\int x^n \, p_0(dx) \quad \mbox{ and } \mu_n:=\int x^n \, q(dx)$$
satisfy $$\lim_{n\to\infty} \frac{m_n}{\mu_n}=0.$$
Under this (or, indeed, a weaker) assumption, Kingman showed 
that $(p_n)$ converges to a limit distribution $p(dx)$, which does not depend on~$p_0$. Moreover, $p$ is absolutely continuous 
with respect to $q$ if and only if $$\beta \int_0^1 \frac{q(dx)}{1-x} \geq 1.$$
Otherwise, 
\begin{equation}\label{gam}
\gamma(\beta):=1-\beta \int_0^1 \frac{q(dx)}{1-x}>0,
\end{equation}
and this is the case of interest to us. In this case the limiting distribution $p(dx)$ still exists, but 
it has an atom at the optimal fitness~$1$,  an effect called \emph{condensation}. The limiting distribution 
does not depend on~$p_0$ and equals
$$p(dx) = \beta \frac{q(dx)}{1-x} +  \gamma(\beta) \, \delta_1 (dx).$$
Our main result describes the dynamics of condensation in terms of a scaling limit
theorem which zooms into the neighbourhood of the maximal fitness value and
shows the shape of the `wave' eventually forming the condensate, see~Figure~1.

\begin{figure}[h]
  \centerline{ \hbox{ \psfig{file=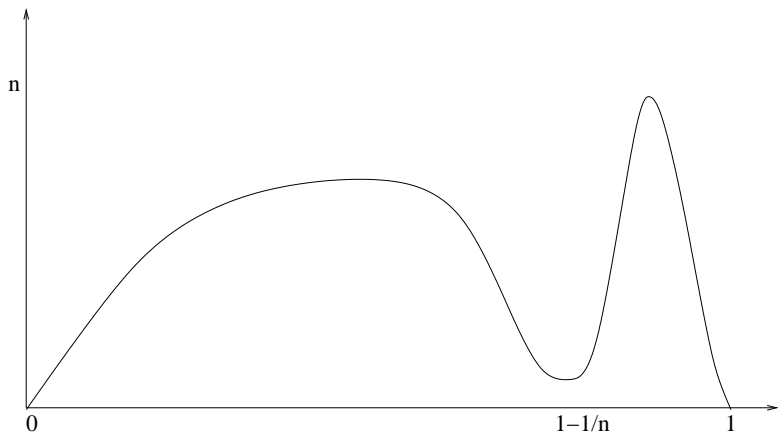,height=2.5in}}}
  \caption{\footnotesize Schematic picture of $p_n$. On the right the \emph{wave} is a high peak with length of order $1/n$ and height of order $n$. 
  By contrast, the \emph{bulk} has height and length of order one. }
\end{figure}

\pagebreak[3]

\begin{theorem}
Suppose that the fitness distribution $q$ satisfies
\begin{equation}\label{tailass}
\lim_{h\downarrow 0} \frac{q(1-h,1)}{h^\alpha}=1,
\end{equation} 
where $\alpha>1$,
and that~\eqref{gam} holds. Then, for $x>0$, 
\begin{equation}\label{wave}
\lim_{n\uparrow \infty} p_n(1-\frac{x}{n},1) = \frac{\gamma(\beta)}{\Gamma(\alpha)}\int_0^x y^{\alpha-1} e^{-y} \, dy.
\end{equation} 
\end{theorem}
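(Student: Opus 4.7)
My plan is to iterate Kingman's recursion once per step, which reduces the whole problem to the asymptotics of a single auxiliary scalar sequence. Telescoping $p_{n+1}(dx)=(1-\beta) w_n^{-1} x\, p_n(dx)+\beta q(dx)$ yields
$$p_n(dx)=\frac{x^n}{V_n}\, p_0(dx)+\frac{\beta\, q(dx)}{V_n}\sum_{a=0}^{n-1} V_{n-a}\, x^a,\qquad V_n:=\prod_{j=0}^{n-1}\frac{w_j}{1-\beta};$$
the initial-data piece contributes negligibly at the wave scale thanks to $m_n/\mu_n\to 0$, so the entire task reduces to understanding the scalar sequence $V_n$.

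The central assertion is that $V_n\sim C\, n^{-\alpha}$ for some $C\in(0,\infty)$, or equivalently $w_n=(1-\beta)(1-\alpha/n+o(1/n))$. Granting this, the wave formula follows by a direct computation: after the re-indexing $b=n-a$, the dominant contribution to $p_n(1-y/n,1)$ comes from bounded $b$. For each such $b$, the tail hypothesis $q(1-h,1)\sim h^\alpha$ together with the Laplace-type estimate $(1-h)^{n-b}\sim e^{-nh}$ for $h=O(1/n)$ gives
$$\int_{1-y/n}^{1} x^{n-b}\, q(dx)\sim \alpha\, n^{-\alpha}\int_0^{y} e^{-u}\, u^{\alpha-1}\, du,$$
while $V_b/V_n\sim V_b\, n^{\alpha}/C$, so the two $n^{\alpha}$ factors cancel exactly. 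Summing over $b$ converges because $V_b\sim C b^{-\alpha}$ and $\alpha>1$, producing the limit $(\beta\alpha/C)(\sum_{b\geq 1} V_b)\int_0^y e^{-u}u^{\alpha-1}du$. The prefactor equals $\gamma(\beta)/\Gamma(\alpha)$ by a normalisation check: in the identity $1=\int p_n$, the terms with $a$ fixed (so $V_{n-a}/V_n\to 1$) contribute $\beta\int q(dx)/(1-x)=1-\gamma(\beta)$, and the remaining wave contribution must therefore equal exactly $\gamma(\beta)$.

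The main obstacle is establishing the sharp rate $w_n=(1-\beta)(1-\alpha/n+o(1/n))$. I plan to work directly with the rescaled tail $F_n(y):=p_n(1-y/n,1)$: integration by parts in $\int_{1-y/(n+1)}^{1} x\, p_n(dx)$ followed by Taylor expansion in $1/n$ turns the defining recursion into
$$n\bigl(F_{n+1}(y)-F_n(y)\bigr)=n\eta_n F_n(y)-y\bigl(F_n(y)+F_n'(y)\bigr)+\int_0^{y} F_n(z)\, dz+o(1),$$
where $\eta_n:=((1-\beta)-w_n)/w_n$ and the forcing contribution $n\beta\, q(1-y/(n+1),1)\sim \beta y^\alpha n^{1-\alpha}$ vanishes in the limit since $\alpha>1$. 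Any subsequential limit $F$ of $F_n$, extracted via Helly selection from the monotone uniformly bounded family, must therefore solve $\eta_\infty F(y)=y(F(y)+F'(y))-\int_0^{y} F(z)\, dz$ with $\eta_\infty:=\lim n\eta_n$. Differentiating produces $yF''(y)=(\eta_\infty-1-y)F'(y)$, whose integrable solutions are $F(y)=A\int_0^{y} t^{\eta_\infty-1}e^{-t}\, dt$. The exponent is pinned down by observing directly from the iterated formula that near $x=1$ the density of $p_n$ inherits the factor $(1-x)^{\alpha-1}$ from $q$, so $F'(y)\propto y^{\alpha-1}e^{-y}$, forcing $\eta_\infty=\alpha$. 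The normalisation $F(\infty)=\gamma(\beta)$ from Kingman's condensation limit then determines $A=\gamma(\beta)/\Gamma(\alpha)$, and uniqueness of the limiting profile promotes subsequential convergence to convergence of the entire sequence, with the rate for $w_n$ dropping out a posteriori.
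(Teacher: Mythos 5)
Your outer structure is sound and in fact coincides with the paper's: your telescoped formula is Kingman's solution formula (equation (2.1) of the paper, displayed as \eqref{sol}), your $V_n$ is, up to a constant factor and an index shift, exactly the paper's $u_n=W_n(1-\beta)^{1-n}$, the $p_0$-term is dismissed the same way via $m_n/\mu_n\to 0$, and your final computation --- dominant contribution from bounded $b=n-a$, the Laplace estimate $\int_{1-x/n}^1 y^{n-b}\,q(dy)\sim\alpha n^{-\alpha}\int_0^x u^{\alpha-1}e^{-u}\,du$, summability of $\sum_b V_b$, and even the normalisation trick for the prefactor --- reproduces, modulo routine uniformity estimates, the paper's second half; your constant $C=\frac{\beta}{\gamma(\beta)}\,\Gamma(\alpha+1)\sum_b V_b$ is exactly the $c$ of the paper's Lemma~\ref{thelemma}. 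So everything hinges, as in the paper, on the sharp asymptotics $V_n\sim Cn^{-\alpha}$.

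It is precisely there that your proposal has genuine gaps. The paper proves this via the defective renewal equation $u_n=\frac{\beta}{1-\beta}\sum_{r=1}^{n-1}u_{n-r}\mu_r+m_n$, obtaining $\sum u_n<\infty$ from the renewal theorem and then running a careful inductive bootstrap against $\mu_r\sim\Gamma(\alpha+1)r^{-\alpha}$, which yields the exponent and the constant simultaneously. Your ODE-limit scheme does not deliver this, for several reasons. (i) Helly selection gives convergence of $F_{n_k}$ but nothing forces $n(F_{n_k+1}-F_{n_k})\to 0$, so a subsequential limit of the discrete evolution need not be a stationary profile; this step needs an a priori estimate you do not have. (ii) The existence (or even subsequential boundedness) of $\eta_\infty=\lim n\eta_n$ is assumed, yet it is of the same depth as the statement to be proven; worse, $w_n=(1-\beta)(1-\alpha/n+o(1/n))$ is strictly \emph{weaker} than $V_n\sim Cn^{-\alpha}$, since non-summable $o(1/n)$ errors only give $V_n=n^{-\alpha+o(1)}$ and destroy the constant $C$, which your final computation and prefactor identification require. (iii) Pinning $\eta_\infty=\alpha$ by ``the density of $p_n$ inherits the factor $(1-x)^{\alpha-1}$'' is circular --- uniform control of this factor at scale $1/n$ is essentially the theorem --- and it also presumes a density that the hypothesis $q(1-h,1)\sim h^\alpha$ does not provide: $q$ may be purely atomic near $1$, so $F_n'$ need not exist and your Taylor/integration-by-parts manipulations require at least a weak formulation. (iv) The boundary condition $F(\infty)=\gamma(\beta)$ does not follow from Kingman's weak convergence: mass could a priori approach the atom at intermediate scales (say at distance $n^{-1/2}$ from $1$), which is invisible to $F_n(y)$ for any fixed $y$; weak convergence only gives the upper bound $\limsup_n F_n(y)\le\gamma(\beta)$, and ruling out lingering intermediate mass is part of what must be proven --- in the paper it is a consequence of the lemma. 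Since the two free parameters $(A,\eta_\infty)$ of your candidate profiles $A\int_0^y t^{\eta_\infty-1}e^{-t}\,dt$ are pinned only by (iii) and (iv), both the identification of the limit and the upgrade from subsequential to full convergence collapse; the renewal-theoretic induction is the missing substance and cannot be obtained ``a posteriori'' from the scaling picture.
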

\medskip

We remark that the total mass in the `wave' moving towards 
the maximal fitness value agrees with the mass of the atom in the limiting distribution~$p(dx)$.
Its rescaled shape is that of a \emph{gamma distribution} with shape parameter~$\alpha$.

\section{Proof of Theorem~1}

Note that
$$\mu_n = \int x^n \, q(dx) \sim \Gamma(\alpha+1)\, n^{-\alpha},$$
where the asymptotics is easily derived from~\eqref{tailass}, and
note that
\begin{align}\label{eq1304-1}\sum_{n=0}^\infty \mu_n = \int_0^1 \frac {q(dx)}{1-x}=\frac1{\beta} \,  (1-\gamma(\beta)).
\end{align}
Also define
$$W_n:=w_1\cdots w_n.$$
Given the family $(W_n)_{n\geq 1}$ the fitness distributions can be obtained as
\begin{equation}\label{sol}
p_n(dx)=\sum_{r=0}^{n-1} \frac{W_{n-r}}{W_n} \, (1-\beta)^r \beta \, x^r \, q(dx) 
+ \frac{1}{W_n}\,(1-\beta)^n \, x^n \, p_0(dx),
\end{equation}
see \cite[(2.1)]{K78}. 
The main tool in the proof is therefore the following lemma.

\begin{lemma}\label{thelemma}
We have, as $n\uparrow \infty$, 
$$W_n \sim c\,  n^{-\alpha} (1-\beta)^{n-1},$$
where $$c=\frac{\beta}{\gamma(\beta)}\, \Gamma(\alpha+1)\,  \sum_{k=1}^\infty W_k \, (1-\beta)^{1-k}.$$
\end{lemma}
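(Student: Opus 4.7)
The plan is to translate the statement into one about the rescaled sequence $a_n := W_n(1-\beta)^{1-n}$, which ought to satisfy $a_n \sim c_0\,\mu_n$ with $c_0 := \beta S_\infty/\gamma(\beta)$ and $S_\infty := \sum_{k\ge 1} a_k$. Integrating the explicit representation \eqref{sol} against Lebesgue measure and reindexing $k = n-r$ produces the renewal-type equation
\begin{equation*}
a_n \;=\; \beta \sum_{k=1}^n a_k\,\mu_{n-k} \;+\; (1-\beta)\,m_n,
\end{equation*}
and the structural input driving everything below is the strict contraction $\beta\sum_{j\ge 0}\mu_j = 1-\gamma(\beta) < 1$ coming from \eqref{eq1304-1}.

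\medskip

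Three a priori facts about $(a_n)$ follow by elementary bootstraps on this recursion. Boundedness $a_n \le C$ follows by induction after moving the diagonal term $\beta a_n$ to the left and using $\beta\sum_{j\ge 1}\mu_j/(1-\beta) = (1-\gamma(\beta)-\beta)/(1-\beta) < 1$. The convergence $a_n\to 0$ follows by splitting the convolution at $k = N$ and sending $N\to\infty$, the tail contribution being controlled by the contraction factor $1-\gamma(\beta)$ applied to $\sup_{k\ge N} a_k$. Summability $\sum_n a_n < \infty$ follows by summing the recursion and exchanging orders, which yields $\gamma(\beta)\sum_n a_n \le (1-\beta)\sum_n m_n$; the right-hand side is finite because $m_n = o(\mu_n)$ and $\alpha > 1$ makes $(\mu_n)$ summable.

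\medskip

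The precise asymptotic then follows by a self-consistency argument: divide the recursion by $\mu_n$ and split the sum at $k = \lfloor n/2\rfloor$. On the ``small $k$'' side, $\mu_{n-k}/\mu_n \le 2^\alpha$ uniformly, so summability of $(a_k)$ and dominated convergence yield $\sum_{k\le n/2} a_k\mu_{n-k}/\mu_n \to S_\infty$. On the ``large $k$'' side, the substitution $j = n-k$ produces the diagonal $a_n/\mu_n$ plus $\sum_{j\ge 1} a_{n-j}\mu_j/\mu_n$; since $\mu_{n-j}/\mu_n \to 1$ for each fixed $j$ and $\sum_j \mu_j < \infty$, this last sum has limsup bounded by $L M^*$, where $L := \limsup a_n/\mu_n$ and $M^* := \sum_{j\ge 1}\mu_j$. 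After moving the diagonal to the left, the resulting inequality
\begin{equation*}
(1-\beta)\,L \;\le\; \beta\,S_\infty \;+\; \beta\,L\,M^*,
\end{equation*}
combined with the algebraic identity $1-\beta(1+M^*) = \gamma(\beta)$, reduces to $L \le \beta S_\infty/\gamma(\beta)$. A matching liminf bound via Fatou (using $a_n \ge 0$) gives equality, so $a_n/\mu_n \to c_0$, and rewriting in terms of $W_n = a_n(1-\beta)^{n-1}$ yields the lemma with $c = c_0\,\Gamma(\alpha+1) = \frac{\beta\,\Gamma(\alpha+1)}{\gamma(\beta)}\sum_{k\ge 1} W_k(1-\beta)^{1-k}$.

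\medskip

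The delicate step is the a priori finiteness of $L$, i.e., upgrading ``$a_n \to 0$'' to ``$a_n = O(\mu_n)$''. A naive bound $\sum_k a_k\mu_{n-k}/\mu_n \le C\sum_j \mu_j/\mu_n$ is useless because $\mu_j/\mu_n \to \infty$ for $j$ fixed. One clean way to secure the bound is to expand the recursion as the Neumann series
\begin{equation*}
a_n \;=\; (1-\beta)\sum_{j\ge 0}\beta^j\,(m\ast\mu^{\ast j})_n,
\end{equation*}
and to exploit subexponentiality: the regularly varying tail $\mu_n\sim \Gamma(\alpha+1) n^{-\alpha}$ with $\alpha>1$ gives by induction on $j$ the classical asymptotic $(\mu^{\ast j})_n \sim j\,\|\mu\|_1^{j-1}\,\mu_n$, hence $(m\ast\mu^{\ast j})_n = O(j\,\|\mu\|_1^{j-1}\mu_n)$ uniformly in $n$, and the series $\sum_j \beta^j\,j\,\|\mu\|_1^{j-1} = \beta/(1-\beta\|\mu\|_1)^2 = \beta/\gamma(\beta)^2 < \infty$ converges. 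This uniform bound closes the self-consistency loop above; incidentally it also yields $a_n/\mu_n \to \beta(1-\beta)\|m\|_1/\gamma(\beta)^2 = \beta S_\infty/\gamma(\beta)$ directly, providing a parallel route to the same constant $c_0$.
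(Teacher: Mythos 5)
Your route is genuinely different from the paper's, and its architecture is sound. The paper works with the same rescaled sequence ($u_n=W_n(1-\beta)^{1-n}$, your $a_n$; your recursion with the diagonal term $\beta a_n\mu_0$ included is Kingman's equation (2.3) after moving that term across, so your starting point is correct) but then proceeds quite differently: it cites Feller's renewal theorem to get $\sum_n u_n<\infty$, and then runs a single inductive bootstrap, assuming $u_r\le c_r r^{-\alpha}$ and splitting the convolution into the three zones $r\le\eps n$, $\eps n\le r\le\eta n$, $r\ge\eta n$; the middle zone is shown to be $O(n^{1-2\alpha})=o(n^{-\alpha})$, the recursion for the constants $c_n$ converges to a fixed point $c^*(\eps,\delta,\eta)$, and letting $\eps,\delta\downarrow 0$, $\eta\uparrow 1$ identifies $c$ (with the lower bound ``similar''). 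Thus the paper manufactures the a priori bound $u_n=O(n^{-\alpha})$ and the precise constant \emph{simultaneously} in one induction, while you decouple them: your elementary derivation of summability (summing the recursion and using the contraction $\beta\sum_{j\ge 0}\mu_j=1-\gamma(\beta)<1$) avoids the renewal-theorem citation altogether, and your limsup/liminf sandwich after dividing by $\mu_n$, with reverse Fatou on one side and Fatou on the other, treats upper and lower bounds symmetrically and is arguably cleaner than the paper's parameter bookkeeping. Your final constant matches the lemma: $a_n/\mu_n\to\beta S_\infty/\gamma(\beta)$ together with $\mu_n\sim\Gamma(\alpha+1)n^{-\alpha}$ gives exactly $c=\frac{\beta}{\gamma(\beta)}\Gamma(\alpha+1)\sum_k W_k(1-\beta)^{1-k}$, and the identity $1-\beta(1+M^*)=\gamma(\beta)$ you use is correct by \eqref{eq1304-1}.

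The one step that does not hold as written is the uniformity in $j$ of your Neumann-series bound, which you correctly identify as the crux but then justify too quickly. Induction on the fixed-$j$ asymptotics $(\mu^{\ast j})_n\sim j\,\bigl(\sum_{i\ge0}\mu_i\bigr)^{j-1}\mu_n$ yields, for each $j$, some constant $C_j$ with $(\mu^{\ast j})_n\le C_j\,\mu_n$, but the naive induction step (splitting the convolution at $n/2$) produces a recursion of the shape $d_{j+1}\le A\,\bigl(\sum_i\mu_i\bigr)^{j}+B\,d_j$ in which the coefficient $B$ carries a factor like $2^\alpha$ and can exceed $\sum_i\mu_i$; the resulting geometric growth of $C_j$ may defeat the factor $\beta^j$, so you cannot simply sum $\sum_j\beta^j j\bigl(\sum_i\mu_i\bigr)^{j-1}$ as you do. What is needed is the local (sequence) form of \emph{Kesten's lemma}: for every $\eps>0$ there is $C_\eps$ with $(\mu^{\ast j})_n\le C_\eps\bigl((1+\eps)\sum_{i\ge0}\mu_i\bigr)^{j}\mu_n$ for all $j$ and $n$, proved by the sup-recursion trick with the convolution split at a fixed large lag $K$ rather than at $n/2$; this holds for regularly varying sequences such as $\mu_n\sim\Gamma(\alpha+1)n^{-\alpha}$. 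Since $\beta\sum_{i\ge0}\mu_i=1-\gamma(\beta)<1$, you may choose $\eps$ with $\beta(1+\eps)\sum_i\mu_i<1$, and the weaker uniform bound suffices both to secure $a_n=O(\mu_n)$ for your sandwich (using also $m_n\le C'\mu_n$, so $m\ast\mu^{\ast j}\le C'\mu^{\ast(j+1)}$) and to validate your ``parallel route'' limit $a_n/\mu_n\to\beta(1-\beta)\sum_n m_n/\gamma(\beta)^2$ by dominated convergence, which indeed equals $\beta S_\infty/\gamma(\beta)$. With Kesten's bound substituted for the asserted uniform $O(j(\sum_i\mu_i)^{j-1}\mu_n)$, your proof is complete; everything else checks out.
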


\begin{proof}
Integrating~\eqref{sol} we obtain~\cite[(2.3)]{K78}
$$W_n=\sum_{r=1}^{n-1} W_{n-r} \, (1-\beta)^{r-1} \beta \, \mu_r 
+ (1-\beta)^{n-1} \, m_n.$$
Abbreviate $u_n:= W_n \, (1-\beta)^{1-n}.$
Then $u_n$ satisfies the \emph{renewal equation}
$$u_n = \sfrac{\beta}{1-\beta}\, \sum_{r=1}^{n-1}  u_{n-r}  \mu_r + m_n, \qquad \mbox{ for } n\geq 1.$$
Using (\ref{eq1304-1}), we obtain $\frac{\beta}{1-\beta}\sum_{n=1}^\infty \mu_n=1-\frac{\gamma(\beta)}{1-\beta}<1$. Hence, the renewal theorem, see e.g.~\cite[XXXIII.10, Theorem~1]{F}, implies that
$$\sum_{n=1}^\infty u_n = \frac{\sum_{n=1}^\infty m_n}{1- \frac{\beta}{1-\beta}\, \sum_{n=1}^\infty \mu_n}=\frac{1-\beta}{\gamma(\beta)} \sum_{n=1}^\infty m_n<\infty$$
where the finiteness follows since $m_n$ is bounded by a constant multiple of $\mu_n$ and 
$$
\sum_{n=0}^\infty \mu_n  =\int \frac {q(dx)}{1-x}  <\infty.
$$ 
Fix $\delta>0$ and $0<\eps<\eta<1$ and suppose
$n$ is large enough such that $\eta n\leq n-1$ and
$$\mu_r\leq (\Gamma(\alpha+1)+\delta)\, r^{-\alpha} \quad \mbox{ for all } r\geq (1-\eta) n.$$ 
For an inductive argument suppose that $c_1, \ldots, c_r$ are chosen such that $u_r\leq c_r\, r^{-\alpha}$ for all 
$\eps n \leq r \leq n-1$.  Then one has for $r=1,\dots,n-1$
$$
u_r \mu_{n-r} \leq \begin{cases} (1-\eps)^{-\alpha} (\Gamma(\alpha+1)+\delta) n^{-\alpha} u_r &\text{ if }r\leq \eps n,\\
c_r\,(\Gamma(\alpha+1)+\delta) r^{-\alpha} (n-r)^{-\alpha}  &\text{ if } \eps n\leq r \leq \eta n,\\
c_{r}\, \eta^{-\alpha} n^{-\alpha} \mu_{n-r} &\text{ if }  \eta n\leq r,
\end{cases}
$$
so that
\begin{align}
u_n 
& \leq  (1-\eps)^{-\alpha}  \,  \sfrac{\beta}{1-\beta}\, (\Gamma(\alpha+1)+\delta)\, \Bigl(\sum_{r=1}^{\infty}  u_r\Bigr) \,  n^{-\alpha}\,  \notag\\
& \phantom{space} +  \sfrac{\beta}{1-\beta}\, (\Gamma(\alpha+1)+\delta)\,  \Bigl(\sfrac1n \sum_{r=\lfloor \eps n\rfloor +1}^{\lfloor \eta n\rfloor}  
c_r\, \bigl(\sfrac rn\bigr)^{-\alpha}\bigl(1-\sfrac rn\bigr)^{-\alpha} \Bigr) n^{1-2\alpha}\, \label{defcn}\\
& \phantom{space} +  \sfrac{\beta}{1-\beta}\,\eta^{-\alpha} \, \Bigl(\sum_{r=\lfloor \eta n\rfloor+1}^{n-1}  c_r \, \mu_{n-r}\Bigr)  n^{-\alpha}+ m_n
=: c_n n^{-\alpha}.\notag
\end{align}
By induction this yields a sequence $(c_n)$ with
$u_n \leq c_n\, n^{-\alpha}$ for all~$n\geq 1$.
\smallskip

Using that $m_n n^{\alpha} \to 0$ by assumption, 
and that the term~\eqref{defcn} is bounded by a constant multiple~of
$$n^{1-2\alpha}\, \int_{\eps }^{\eta }  dr\, r^{-\alpha}(1-r)^{-\alpha}  \ll n^{-\alpha},$$
we see that $(c_n)$ converges to the unique solution~$c^*=c^*(\eps, \delta, \eta)$ of
$$c^*=(1-\eps)^{-\alpha}  \, \sfrac{\beta}{1-\beta}\, (\Gamma(\alpha+1)+\delta)\, \sum_{r=1}^{\infty}  u_r 
+  c^*\,\eta^{-\alpha} \, \sfrac{\beta}{1-\beta}\,\sum_{r=1}^\infty \mu_r.$$
Recalling that $\beta \sum_{r=1}^\infty \mu_r = 1-\gamma(\beta)-\beta$, and
letting $\eps, \delta\downarrow 0$ and $\eta\uparrow 1$ we see that $c^*(\eps, \delta, \eta)$ converges to
$$c=\frac{\beta}{\gamma(\beta)}\, \Gamma(\alpha+1) \, \sum_{k=1}^\infty u_k= 
\frac{\beta}{\gamma(\beta)}\, \Gamma(\alpha+1)\,  \sum_{k=1}^\infty W_k \, (1-\beta)^{1-k},$$
which yields the upper bound.
%
The lower bound can be derived similarly.
\end{proof}

To complete the proof using the lemma, we look at~\eqref{sol} and get
\begin{align*}
p_n\big(1- \frac{x}{n},1 \big) & =
\sum_{r=0}^{n-1} \frac{W_{n-r}}{W_n} \, (1-\beta)^r \beta \, \int_{1-x/n}^1 y^r \, q(dy) 
+ \frac{1}{W_n}\,(1-\beta)^n \, \int_{1-x/n}^1 y^n \, p_0(dy). \\
\end{align*}
The second term vanishes asymptotically, as
$$\frac{1}{W_n}\,(1-\beta)^n \, \int_{1-x/n}^1 y^n \, p_0(dy)
\sim (1-\beta)\, \frac{m_n}{c n^{-\alpha}} \frac{\int_{1-x/n}^1 y^n \, p_0(dy)}{\int_0^1 y^n \, p_0(dy)}\to 0,$$
using our assumption that $m_n/\mu_n\to 0$.  The first term is  asymptotically equivalent to 
$$n^\alpha \sum_{r=0}^{n-1} W_{n-r} \,  c^{-1}\,(1-\beta)^{1-n+r}\beta\, \int_{1-x/n}^1 y^{r} \, q(dy).$$
By chosing a large~$M$, the contribution coming from terms with $r\leq n-Mn^{1/\alpha}$ can be bounded by
a constant multiple of
$$(n-Mn^{1/\alpha}) \, \Big(\frac{n}{Mn^{1/\alpha}}\Big)^\alpha q\big(1-\sfrac{x}{n},1\big),$$
which is bounded by an arbitraily small constant. For the remaining terms we can now use that
$$\begin{aligned}
\int_{1-x/n}^1 y^{sn} \, q(dy) & 
\sim   \int_x^0 e^{-as} \, dq(1-\sfrac{a}n,1)
\sim   \alpha\, n^{-\alpha} \int_0^x a^{\alpha-1} e^{-as} \, da,
\end{aligned}$$
and a change of variables to obtain equivalence to
$$\alpha\,\beta\,c^{-1} \Big(\sum_{m=1}^{\infty} W_{m} \,(1-\beta)^{1-m}\Big) \, \int_{0}^x a^{\alpha-1} e^{-a}\, da,$$
and the result follows as, by Lemma~\ref{thelemma},
$$\alpha\,\beta\,c^{-1} \Big(\sum_{m=1}^{\infty} W_{m} \,(1-\beta)^{1-m}\Big) =  \frac{\gamma(\beta)}{\Gamma(\alpha)},$$
as required.

\section{Discussion}

Kingman's model is on the one hand one of the simplest models in which a condensation effect can
be observed, on the other hand it is sufficiently rich to  study the emergence of condensation as 
a dynamical phenomenon. The simplicity of the model allows a rigorous treatment with elementary means,
but we believe that our calculation has far reaching consequences as a variety of much more complex 
models in quite diverse areas of science have similar features. Among the models we expect to share
many features with Kingman's model are models of the physical phenomenon of Bose-Einstein condensation, 
of wealth condensation in macroeconomics, or the emergence of traffic jams.
\medskip

\pagebreak[3]

Our \emph{main conjecture} is that in a large universality class of models in which effects similar to 
mutation and selection compete effectively on a bounded and continuous statespace, the `wave' moving 
towards  the maximal state forming the condensate is of a Gamma shape. 
\medskip

Random models which are suitable test cases for our universality claim arise, for example, in
the study of random permutations with cycle weights. Here the probability of a permutation~$\sigma$
in the symmetric group on $n$~elements is defined as
$$\P_n(\sigma)= \frac1{n!h_n} \prod_{j\geq 1} \theta_j^{R_j(\sigma)},$$
where $R_j(\sigma)$ is the number of cycles of length~$j$ in~$\sigma$ and $h_n$ is a
normalisation constant. For our investigation we focus on the case that $\theta_j\sim j^{\gamma}$
for $\gamma\in\R$. We now discuss results of Betz, Ueltschi and Velenik~\cite{BUV11} and Ercolani and 
Ueltschi~\cite{EU11} in our context.
\medskip

Our interest is in the \emph{empirical cycle length distribution} which is the random measure on $[0,1]$
given by
$$\mu_n= \frac1n \sum_{i=1}^n \lambda_i\, \delta_{\frac{\lambda_i}{n}},$$ 
where the integers $\lambda_1\geq \lambda_2 \geq \cdots$ are the ordered cycle lengths of a permutation
chosen randomly according to $\P_n$. 
The asymptotic behaviour of $\mu_n$ shows
three phases depending on the value of the parameter~$\gamma$, see Table~1 in~\cite{EU11}:
\begin{itemize}
\item If $\gamma<0$ large  cycles are preferred and the empirical cycle length distribution concentrates 
asymptotically in the point $1$, 
\item if $\gamma=0$ there is no condensation and we have convergence to a beta distribution, 
\item if $\gamma>0$ we see a preference for short cycles and  the empirical cycle length distribution concentrates 
asymptotically in the point $0$.  
\end{itemize}
In the two phases in which see a condensation effect we have partial information on the shape of the wave,
which is consistent with our universality claim.
\medskip

Let us first look at the case $\gamma>0$ when the empirical cycle length distribution concentrates in
the left endpoint of our domain, i.e. the normalised cycle lengths vanish asymptotically.
In this case Theorem~5.1 of~\cite{EU11} shows that, for $\alpha=\frac{\gamma}{\gamma+1}$,
$$\lim_{n\to\infty} \E\big[\mu_n[0,\sfrac{x}{n^\alpha})\big]=\frac1{\Gamma(\gamma+1)}\,  \int_0^x y^\gamma e^{-y}\, dy,$$
i.e.\ focusing on the left edge of the domain in the scale $1/n^\alpha$ we see a gamma distributed wave shape with
parameter~$\gamma$, at least in 
the mean.  It is a natural conjecture that this convergence holds not only  in expectation, but also in probability, and  establishing 
this fact is subject of an  ongoing project.
\medskip

If  $\gamma<0$ large cycles are preferred. Here the situation is slightly different because the wave sweeping
towards the maximal normalised cyclelength is on the critical scale $1/n$ and this means that we expect that the discrete
nature of $\mu_n$ is retained in the limit. 
\medskip

More precisely,  Theorem~3.2 of~\cite{BUV11} implies that
$$\lim_{n\to\infty} \E\big[\mu_n[1-\sfrac{m}{n},1]\big]= \sfrac12\, \sum_{n=0}^m e^{-c^*n}h_n,$$
where $c^*$ is a `Malthusian parameter' chosen such that
$$\sum_{n=1}^\infty e^{-c^*n}h_n=1.$$
We further note that $h_n\sim C\,n^{\gamma-1}$ by \cite[(7.1)]{EU11} and so we are still able to recognise 
a discrete form of a gamma distribution with parameter $\gamma$ in this case.
\medskip

The most elaborate model in which we were able to test our hypothesis is a random network model with fitness. We now
give an informal preview of forthcoming results of Dereich~\cite{D12}, which are motivated by a problem of Borgs et al.~ \cite{BCDR07}.
\medskip

A preferential attachment network model is a sequence of random  graphs $(\cG(n))_{n\in\N}$ that is built dynamically: one starts with a graph $\cG(1)$ consisting of a single vertex $1$ and, in general, the graph  $\cG({n+1})$ is built by adding the vertex $n+1$ to the graph $\cG(n)$ and by insertion of edges connecting the new vertex to the graph $\cG(n)$ according to an attachment rule. Typically, the attachment rule rewards vertices that already have a high degre: in most cases the degree of a vertex has an affine influence on its attractiveness in the collection of new edges. In a preferential attachment model with fitness one additionally assigns each vertex an intrinsic fitness, a positive number, which has a linear impact on its attractiveness in the network formation.\smallskip

Let us be more precise about the variant of the network model to be considered in the rest of this paper. 
We consider a sequence of random \emph{directed} graphs $(\cG(n))_{n\in\N}$ and denote by
$$
\mathrm{imp}_{n}(m):= \mathrm{indegree}_{\cG(n)}(m) +1
$$
the \emph{impact} of the vertex $m\in\{1,\dots,n\}$ in $\cG(n)$. Further, let $F_1,F_2,\dots$ denote a sequence of independent $q$-distributed random variables modeling the fitness of the individual vertices $1,2,\ldots$. The attachment rule is as follows: given the graph $\cG(n)$ and all fitnesses, link $n+1$  to  each individual vertex $m\in\{1,\dots,n\}$ with  an independent Poisson distributed number of edges with parameter
$$
\frac 1{n\,Z_n}\, {F_m\, \mathrm{imp}_{n}}(m),
$$
where $Z_n$ is a normalisation which depends only on $\cG(n)$ and the fitnesses. Note that all links point from new to old vertices so that orientations can be recovered from the undirected set of edges.
We consider two types of normalisations:

\begin{enumerate}
\item \emph{adaptive normalisation}: $Z_n=\frac 1{\lambda n} \sum_{m=1}^n \cF_m \,\mathrm{imp}_{n}(m)$ for a parameter $\lambda>0$,\\[-2mm]
\item \emph{deterministic normalisation}: $(Z_n)$ is a deterministic sequence. 
\end{enumerate}

In the case of adaptive normalisation,  the outdegree of $n+1$ is Poisson distributed with parameter~$\lambda$, even when conditioning on the graph $\cG(n)$. Hence, the total  number of edges is almost surely of order $\lambda n$ so that $\frac 1{n} \sum_{m=1}^n \mathrm{imp}_{n} (m)$ converges almost surely to $\lambda+1$.\smallskip

The analogue of $p_n$ is the \emph{impact measure} given by  
$$\Xi_n= \frac 1{n} \sum_{m=1}^n \mathrm{imp}_{n} (m)\, \delta_{\cF_m}.
$$
It measures the contribution of the vertices of a particular fitness to the total impact. 
\smallskip

As observed in \cite{BiBa01} and verified for a  different variant of the model in \cite{BCDR07}, network models with fitness show a phase transition similar to Bose-Einstein condensation. The verification of this phase transition in the variant considered here is conducted in \cite{DO12}.
\smallskip 

For \emph{adaptive} normalisation  two regimes can be observed
 \begin{enumerate}
\item[{[}FGR{]}] $ \int \frac 1{1-x} \, q(d x)\geq 1+\lambda$: the \emph{fit-get-richer phase},\\[-2mm]
\item[{[}BE{]}]  $\int \frac 1{1-x} \,  q(d x)<1+\lambda$: the \emph{Bose-Einstein phase}  or \emph{innovation-pays-off phase}.
\end{enumerate}

In the fit-get-richer phase, the random measures  $(\Xi_n)_{n\in\N}$ converge almost surely in the weak topology to the measure $\Xi$ on $(0,1]$ given by
 $$\Xi(d x)=  \frac{ \lambda^*}{\lambda^*-x}\, q(d x),$$
where  $\lambda^*\in[1,\infty)$ denotes the unique solution to  
$$ \int \frac {\lambda^*}{\lambda^*-x} \, q(d x)= 1+\lambda,$$ 
whereas, in the Bose-Einstein phase, one observes convergence to
$$\Xi(d x)= \frac 1{1-x} \, q(d x) + \Bigl(1+\lambda-\int \frac 1{1-y} \, q(d y)\Bigr) \delta_1(d x).$$
\smallskip

In order to analyse the emergence of the condensation phenomenon, we consider the preferential attachment model with \emph{deterministic} normalisation. 
We assume that $q$ is regularly varying at~$1$ with representation 
$$
q(1-h,1)= h^\alpha \,\ell(h),
$$
where $\ell:[0,1]\to(0, \infty)$ is a slowly varying function. In order to replicate the Bose-Einstein phenomenon in the model with deterministic normalisation, one needs to choose $(Z_n)$ appropriately. For $1\leq m\leq n$, let
$$
\Upsilon[ m,n] := \sum_{k=\lfloor  m\rfloor}^{\lfloor n\rfloor} \frac {1-Z_k}k.
$$
The Bose-Einstein phenomenon can be replicated by choosing $(Z_n)$ such that
$$
1- Z_n\sim \alpha  (\log n)^{-1} 
$$
and such that the limit
\begin{align}\label{eq1904-1}
\gamma:=\lim_{n\to\infty} \frac {\alpha}{\alpha-1} \Gamma(\alpha) \,\frac{(\log n)^\alpha \cdot \log (\log n)^\alpha}{\ell((\log n)^{-1})} \,\exp\{ \Upsilon[\log n, n]\} 
\end{align}
exists. We stress that such a normalisation can be found  for various fitness distributions $q$ and we  refer the reader to the article~\cite{D12} for the details.

\begin{theorem}Under the above assumptions, one has, for  $x>0$,
$$
\lim_{n\to\infty} \Xi_n\Bigl(1-\frac x{\log n},1\Bigr) = \frac{\gamma}{\Gamma(\alpha)}\int_0^x y^{\alpha-1} e^{-y} \, dy, \text{ in probability}.
$$
For any measurable set  $A\subset [0,1]$ with $1\not\in \partial A$,
one has
$$
\lim_{n\to\infty} \Xi_n(A) = \Xi(A), \text{ in probability},
$$
for  the measure $\Xi$ on $[0,1]$ given by
$$
\Xi(dx) = \frac 1{1-x} \,q(dx) + \gamma \,\delta_1(dx).
$$
\end{theorem}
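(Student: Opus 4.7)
The first step is to mimic the argument of Lemma~\ref{thelemma} at the level of expectations. Setting $\nu_n:=n\,\E[\Xi_n]$ and conditioning on $\cG(n)$ together with the fitnesses, one has
\begin{equation*}
\E\bigl[\mathrm{imp}_{n+1}(m)\,\big|\,\cG(n),\cF_\cdot\bigr]=\mathrm{imp}_n(m)\Bigl(1+\frac{\cF_m}{n\,Z_n}\Bigr),
\end{equation*}
since vertex $n+1$ attaches to $m$ via a Poisson$(\cF_m\,\mathrm{imp}_n(m)/(nZ_n))$ number of edges. Adding the contribution $\delta_{\cF_{n+1}}$ from the new vertex and taking full expectation gives the Kingman-type recursion
\begin{equation*}
\nu_{n+1}(dx)=\Bigl(1+\frac{x}{n\,Z_n}\Bigr)\nu_n(dx)+q(dx),\qquad \nu_1=q,
\end{equation*}
whose iterated form reads
\begin{equation*}
\nu_n(dx)=q(dx)\,\sum_{j=1}^{n}\prod_{k=j}^{n-1}\Bigl(1+\frac{x}{k\,Z_k}\Bigr).
\end{equation*}
This is the analogue of~\eqref{sol} but with a time-dependent selection kernel.

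\textbf{Step 2 (Kernel asymptotics).} Next I would expand $\log(1+x/(kZ_k))$ and use $1-Z_k\sim \alpha/\log k$ to obtain
\begin{equation*}
\log\prod_{k=j}^{n-1}\Bigl(1+\frac{x}{k\,Z_k}\Bigr)=x\,\log(n/j)+x\,\Upsilon[j,n]+O(1/j),
\end{equation*}
so that the product is equivalent to $(n/j)^x\exp(x\,\Upsilon[j,n])$ up to a multiplicative factor with a finite limit as $j\to\infty$. Since $\Upsilon[j,n]\sim\alpha\log(\log n/\log j)$ and $\alpha>1$, the series $\sum_{j\geq 1}(n/j)\exp(\Upsilon[j,n])$ is comparable to $n(\log n)^\alpha\sum_j 1/(j(\log j)^\alpha)$, which converges. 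Hence the sum in Step~1 is dominated by indices $j=O(\log n)$, pinning down $1/\log n$ as the correct wave scale.

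\textbf{Step 3 (Wave and bulk limits in expectation).} I would substitute $x=1-z/\log n$ and use the regular-variation hypothesis $q(1-h,1)=h^\alpha \ell(h)$ to rewrite $q(dx)$ in the $z$-variable as $\alpha z^{\alpha-1}\ell(z/\log n)(\log n)^{-\alpha}\,dz$ to leading order. Throughout the dominant range $j=O(\log n)$, the only $z$-dependence of the kernel is through $(n/j)^{-z/\log n}\to e^{-z}$, which factors out of the sum. Collecting the powers of $\log n$ and matching them against the normalising quantity in~\eqref{eq1904-1} yields
\begin{equation*}
\frac{1}{n}\,\nu_n\Bigl(1-\frac{y}{\log n},1\Bigr)\longrightarrow \frac{\gamma}{\Gamma(\alpha)}\int_0^y z^{\alpha-1}e^{-z}\,dz.
\end{equation*}
For the bulk, where $x$ is bounded away from~$1$, the representation of Step~1 gives $\sum_j (n/j)^x\sim n/(1-x)$, so $(1/n)\,\nu_n(A)\to\int_A q(dx)/(1-x)$ for every $A$ with $1\notin\partial A$, matching the non-atomic part of $\Xi$.

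\textbf{Step 4 (From expectation to probability).} The final, and main, obstacle is concentration. I would introduce the Doob martingale $M_k:=\E[\Xi_n(B)\,|\,\cG(k),\cF_\cdot]$ for $B$ either a wave interval or a bulk set, and control the increments $M_{k+1}-M_k$ by re-running the recursion of Step~1 with the actual Poisson attachment in place of its conditional mean. The resulting second-moment bound telescopes over $k$ and involves the same kernel products as before, weighted by factors of order $1/k^2$. The delicate step is the wave regime, where only $O(n/\log n)$ of the total mass lies in the window $(1-y/\log n,1)$: Poisson noise and signal have comparable magnitude there, and one must verify that the empirical denominator $\frac1n\sum_{m}\cF_m\,\mathrm{imp}_n(m)$ stays close to $Z_n$ despite the heavy-tailed contributions of atypically fit vertices, which is exactly the calibration encoded in~\eqref{eq1904-1}. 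Once the variance of $\Xi_n(B)$ is shown to vanish, Chebyshev's inequality upgrades the expectation asymptotics of Step~3 to convergence in probability and completes the proof.
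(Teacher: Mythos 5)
You should first note that the paper does not actually prove this theorem: it is stated as a preview of the forthcoming work \cite{D12}. However, the Remark immediately following the theorem explicitly diagnoses why the strategy you propose cannot work, and your proposal is precisely the mean-field route the authors warn against. Your Steps 1--2 are correct as computations of the mean: the recursion $\nu_{n+1}(dx)=(1+\frac{x}{nZ_n})\nu_n(dx)+q(dx)$ is exact, and the kernel asymptotics $(n/j)^x e^{x\Upsilon[j,n]}$ is right. But look at where your own Step 2 puts the weight: since $\sum_j 1/(j(\log j)^\alpha)$ converges, the sum at $x$ near $1$ is dominated by \emph{bounded} $j$, i.e.\ by the very earliest vertices, not by $j$ of order $\log n$. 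For such a vertex to contribute to the wave window $(1-x/\log n,1)$ its fitness must lie within $O(1/\log n)$ of $1$, an event of probability $O((\log n)^{-\alpha})\to 0$; yet on that rare event its impact is of order $n(\log n/\log j)^\alpha$, so it contributes order one to $\E[\Xi_n(1-x/\log n,1)]$. The expectation is therefore dominated by configurations that are not present in typical realisations --- exactly the phenomenon the Remark describes (``vertices of particular high fitness that are born very early contribute most although being not present typically''). Consequently the limit of the mean in your Step 3 does not equal the claimed in-probability limit, and this is why the calibration \eqref{eq1904-1} involves $\Upsilon[\log n, n]$ rather than $\Upsilon[1,n]$: the correct constant $\gamma$ encodes the \emph{delay} with which typically-present fit vertices (born only after a diverging time) build up impact, and the true normalisation is strictly smaller than the mean-field one.

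This defect is fatal for Step 4 as well: since the mean is inflated by rare events whose contribution is of the same order as the mean itself, the variance of $\Xi_n(1-x/\log n,1)$ is comparable to the squared mean rather than negligible, so no Chebyshev argument centred at $\E[\Xi_n]$ can yield convergence in probability to the stated gamma wave. A viable repair requires truncation before taking expectations --- for instance, conditioning on the absence of atypically fit vertices among the early arrivals, computing the mean of the truncated functional (which is where the $\Upsilon[\log n,n]$ calibration emerges), and proving concentration only for that truncated quantity; this is presumably the route of \cite{D12}. Separately, your Step 4 conflates the two normalisations: the theorem concerns the \emph{deterministic} normalisation, so there is no empirical denominator $\frac1n\sum_m \cF_m\,\mathrm{imp}_n(m)$ that needs to stay close to $Z_n$; the sequence $(Z_n)$ is given, and \eqref{eq1904-1} is a hypothesis on it, not a concentration statement to be verified. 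Your bulk argument (sets $A$ with $1\notin\partial A$) is less endangered, since there the kernel $(n/j)^x$ with $x$ bounded away from $1$ spreads the mass over all $j$ and rare early vertices are not dominant, but it too needs a genuine second-moment estimate rather than the sketch given.
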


\begin{rem} {\rm In most cases one cannot give an explicit representation for a normalisation $(Z_n)$ satisfying~(\ref{eq1904-1}).  On  first sight, this might be 
suprising since the $(Z_n)$ play a r\^ole analogous to $(W_n)$ in the Kingman model where the analysis is feasible. The difference of both models comes 
from the stochastic nature of the network model. In order to analyse the network model one could start to work with expectations resulting in a mean field model 
similar to the Kingman model. However, the expectations for $\Xi_n$ are dominated by configurations that are not seen in typical realisations:  vertices  of 
particular high fitness that are born very early contribute most although  being not present typically.
To compensate this the normalisations in the network model have to be slightly smaller than  a mean field model would suggest. Vertices of particularly high 
fitness have an impact only with a delay. This causes the $\Upsilon[\log n,n]$ term in (\ref{eq1904-1}) and makes explicit representations for $(Z_n)$ in 
many cases unfeasible.}
\end{rem}
\smallskip

We conclude our discussion with the remark that the case of \emph{unbounded fitness distribution} is also of considerable interest. In this
case Park and Krug~\cite{PK08} have studied the analogue of Kingman's model and (in a particular case) observed emergence of a travelling
wave of Gaussian shape. They also conjecture that this behaviour is of universal nature. 
\bigskip

\ \\[-1mm]

{\bf Acknowledgments:} The second author acknowledges useful discussions with Daniel  Ueltschi at the Oberwolfach workshop
\emph{Interplay of analysis and probability in physics}, January 2012. We would like to thank Marcel Ortgiese for agreeing
to include a preview of~\cite{DO12} in our discussion.
\bigskip

\end{document}